\newtheorem{theorem}{Theorem}[section]
\newtheorem{proposition}[theorem]{Proposition}
\newtheorem{example}[theorem]{Example}
\newcommand{\naturals}{\mathbb{N}}
\renewcommand{\P}{\mathbb{P}}
\DeclareMathOperator{\Des}{Des}
\DeclareMathOperator{\des}{des}
\DeclareMathOperator{\iDes}{iDes}
\DeclareMathOperator{\asc}{asc}
\DeclareMathOperator{\snk}{snk}
\newcommand{\mo}{\mathfrak{o}}
\author{Ryan Kaliszewski}
\title{Hook coefficients of chromatic functions}
\begin{document}
\maketitle

\begin{abstract}

The chromatic symmetric function of a graph is a generalization of the chromatic polynomial.  The key motivation for studying the structure of a chromatic symmetric function is to answer positivity conjectures by Stanley in 1995 and Gasharov in 1996.  As a symmetric function one can write the chromatic symmetric function in the basis of Schur functions.  In this paper we address the positivity of the Schur coefficients when the parameter of the Schur function is a hook shape.  Furthermore, we explore the hook coefficients of the chromatic quasisymmetric function introduced by Shareshian and Wachs in 2014 when expanded in the (Gessel) fundamental basis.
\end{abstract}

\textbf{Keywords:} chromatic, symmetric, Schur, positivity

\section{Introduction}
\label{sec:in}

The chromatic symmetric function associated to a simple graph $G$ is introduced in \cite{Stanley95} as a generalization of the chromatic polynomial.  When the chromatic symmetric function is expanded in various bases of symmetric functions the coefficients are related to partitions of the vertices into stable subsets, the M\"{o}bius function of the lattice of contractions of $G$, and the structure of the acyclic orientations of $G$ \cite{Greene83,Stanley95,Whitney32}.  Also, for certain graphs these coefficients are related to Hecke algebras and Kazhdan Lusztig polynomials \cite{Haiman93,Stanley95}.  

Since the chromatic symmetric function of $G$, $\textbf{X}_G(\textbf{x})$, is indeed symmetric, it can be expanded in the basis of Schur functions, $\textbf{X}_G(\textbf{x})=\sum_\lambda c_\lambda s_\lambda$.  If we denote the hook $\lambda^{(k)}=(k,1,\ldots,1)$ we can refer to $c_{\lambda^{(k)}}$ as a \emph{hook coefficient}.  This paper will show that the hook coefficients are (positive) weighted enumerations of the acyclic orientations of $G$.  Specifically, we will prove the following theorem:

\begin{theorem}\label{introthm}
If $a_j$ denotes the number of acyclic orientations of the graph $G$ with $j$ sinks, then
\begin{equation} c_{\lambda^{(k)}}=\sum_{j=1}^n {j-1 \choose k-1}\cdot a_j \label{introthmline} \end{equation}
where $n$ is the number of vertices of $G$.
\end{theorem}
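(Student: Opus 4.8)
The plan is to read off the hook coefficient as an inner product and transport the computation to the power--sum basis. Recall Stanley's expansion $\textbf{X}_G(\textbf{x})=\sum_{S\subseteq E(G)}(-1)^{|S|}p_{\lambda(S)}$, where $\lambda(S)$ is the partition of $n$ recording the sizes of the connected components of the spanning subgraph $(V(G),S)$. Since the Schur functions are orthonormal, $c_{\lambda^{(k)}}=\langle\textbf{X}_G,s_{\lambda^{(k)}}\rangle$, and writing $s_\mu=\sum_{\rho}z_\rho^{-1}\chi^\mu(\rho)\,p_\rho$ together with $\langle p_\rho,p_\sigma\rangle=z_\rho\delta_{\rho\sigma}$ gives $c_{\lambda^{(k)}}=\sum_{S\subseteq E(G)}(-1)^{|S|}\chi^{\lambda^{(k)}}\!\bigl(\lambda(S)\bigr)$. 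Now bundle these over $k$ against $t^{\,n-k}$; writing $\lambda^{(k)}=(k,1^{\,n-k})$, a hook with leg length $n-k$, and invoking the classical hook--character identity
\[
\sum_{b=0}^{n-1}\chi^{(n-b,\,1^b)}(\rho)\,t^{\,b}=\frac{1}{1+t}\prod_{i}\bigl(1-(-t)^{\rho_i}\bigr)
\]
(which follows by iterating Murnaghan--Nakayama, a border strip inside a hook being forced to shorten either the arm or the leg) produces
\[
\sum_{k=1}^{n}c_{\lambda^{(k)}}\,t^{\,n-k}=\frac{1}{1+t}\sum_{S\subseteq E(G)}(-1)^{|S|}\prod_{\text{components }c\text{ of }(V(G),S)}\bigl(1-(-t)^{|c|}\bigr).
\]

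The crux is to identify the right-hand side with a sink enumerator, namely
\[
\sum_{S\subseteq E(G)}(-1)^{|S|}\prod_{\text{components }c}\bigl(1-(-t)^{|c|}\bigr)=\sum_{j\ge1}a_j\,t^{\,n-j}(1+t)^{j}.
\]
This is a homogenized form of Stanley's theorem relating $\textbf{X}_G$ to acyclic orientations weighted by their number of sinks, and could simply be cited; a self--contained proof goes as follows. Expanding each factor $1-(-t)^{|c|}$ corresponds to choosing a vertex set $W$ (the union of the chosen components) together with a spanning subgraph $S$ having no edge across the cut $(W,V(G)\setminus W)$, so that $S$ restricts independently to $G[W]$ and to $G[V(G)\setminus W]$. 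The alternating sum over the second restriction is $(1-1)^{|E(G[V(G)\setminus W])|}$, forcing $V(G)\setminus W$ to be a stable set; the alternating sum over the first restriction equals, since $|S_1|+c(W,S_1)=|W|+\mathrm{nullity}(S_1)$ and $\sum_{S_1}(-1)^{\mathrm{nullity}(S_1)}=T_{G[W]}(2,0)$ is the number of acyclic orientations of $G[W]$, exactly $(-1)^{|W|}$ times that count. The surviving sign cancels the $(-1)^{|W|}$ inside $(-t)^{|W|}$, so the left side becomes $\sum_{J\text{ stable}}t^{\,n-|J|}\cdot\#\{\text{acyclic orientations of }G-J\}$, with $J:=V(G)\setminus W$. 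On the other side, expanding $(1+t)^j$ rewrites the right-hand sum as $\sum_{\mo}\sum_{J\subseteq\mathrm{Sinks}(\mo)}t^{\,n-|J|}$ over acyclic orientations $\mo$ of $G$. These agree via the bijection $(\mo,J)\mapsto(J,\mo|_{G-J})$: a set of sinks is always stable, and conversely, given a stable set $J$ and an acyclic orientation of $G-J$, orienting every edge meeting $J$ toward $J$ yields an acyclic orientation of $G$ having $J$ among its sinks; the two maps are mutually inverse and preserve $|J|$.

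Combining the two displays gives $\sum_{k}c_{\lambda^{(k)}}t^{\,n-k}=\sum_{j}a_j\,t^{\,n-j}(1+t)^{\,j-1}$, and equating coefficients of $t^{\,n-k}$ on both sides, using $(1+t)^{\,j-1}=\sum_{k}\binom{j-1}{k-1}t^{\,j-k}$, yields $c_{\lambda^{(k)}}=\sum_{j=1}^{n}\binom{j-1}{k-1}a_j$, which is \eqref{introthmline}. I expect the main obstacle to be the sink identity in the middle display: organizing the Whitney--rank bookkeeping so that the count of acyclic orientations of $G[W]$ emerges with the correct sign, and then correctly pairing it with the sink statistic through the bijection. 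By comparison, the passage to power sums (granting the hook--character identity) and the final coefficient extraction are routine.
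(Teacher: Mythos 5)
Your argument is correct, but it is a genuinely different route from the paper's. You stay entirely inside symmetric function theory: Stanley's power-sum expansion $\mathbf{X}_G=\sum_{S\subseteq E}(-1)^{|S|}p_{\lambda(S)}$, the hook-character generating function $(1+t)\sum_b\chi^{(n-b,1^b)}(\rho)t^b=\prod_i(1-(-t)^{\rho_i})$, and then a Whitney-rank/Tutte computation ($T_{G[W]}(2,0)$ counting acyclic orientations) paired with the bijection between pairs (stable set $J$, acyclic orientation of $G-J$) and pairs (acyclic orientation of $G$, subset of its sinks) — all of which checks out, including the sign bookkeeping via $|S_1|+c(W,S_1)=|W|+\mathrm{nullity}(S_1)$ and the divisibility of each factor $1-(-t)^m$ by $1+t$. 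The paper instead proves a stronger, $t$-refined statement first: it expands the Shareshian--Wachs chromatic quasisymmetric function in the fundamental basis via acyclic orientations and linear extensions of the associated labeled poset, shows the number of linear extensions with descent set $\{1,\ldots,n-k\}$ is $\binom{\snk(\mo)-1}{k-1}$, and then obtains the theorem at $t=1$ by invoking the Egge--Loehr--Warrington result that hook fundamental coefficients of a symmetric function equal hook Schur coefficients. What each buys: your proof is self-contained at the symmetric level and avoids both the quasisymmetric machinery and the hook lemma of Egge--Loehr--Warrington, while the paper's route additionally delivers Theorem \ref{bigtheorem}, the graded refinement $d_{\lambda^{(k)}}(t)=\sum_{\mo}\binom{\snk(\mo)-1}{k-1}t^{\des(\mo)}$, which your generating-function argument does not recover. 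A further shortcut available to you: your middle ``sink identity'' is equivalent to Stanley's $e$-expansion sink theorem (quoted in the paper), and combined with $\langle e_\lambda,s_{(k,1^{n-k})}\rangle=\binom{\ell(\lambda)-1}{k-1}$ it gives the theorem directly, so you could cite it rather than reprove it.
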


Previously Gasharov \cite{Gasharov96} has proven that if $G$ is the incomparability graph of a poset and $G$ is claw-free, then all of the coefficients in the Schur expansion, $c_\lambda$, are non-negative.  Gasharov then conjectured that for any graph that is claw-free all coefficients in the Schur expansion are non-negative.  Further, though in \cite{Chow97} formula \eqref{introthmline} was shown for incomparability graphs of (finite) posets, the author independently rediscovered the formula for \emph{any} graph by using Stanley's quasisymmetric construction in \cite{Stanley95}.

Also, in the case of the claw-free incomparability graph Gasharov has proven in \cite{Gasharov96} that the coefficient $c_\lambda$ is equal to the number of $P$-tableaux of shape $\lambda$.  There is a correspondence between $P$-tableaux of hook shapes and acyclic orientations of the claw-free incomparability graph presented in \cite{Chow97}.  In this paper we will show that the correspondence holds even when the graph is not claw-free.

To prove formula \eqref{introthmline} we will consider the quasisymmetric refinement of the chromatic symmetric function that was introduced by Shareshian and Wachs in \cite{Shareshian14}.  Since this function is quasisymmetric it can be expanded in the basis of (Gessel) fundamental quasisymmetric functions $\textbf{X}_G(\textbf{x},t)=\sum_\alpha d_\alpha(t) F_\alpha$.  We will discuss a formula for the hook coefficients $d_{\lambda^{(k)}}(t)$ of $\textbf{X}_G(\textbf{x},t)$ and show that it specializes to forumla \eqref{introthmline} in the symmetric case.

\section{Background definitions}

The ring of symmetric functions over $\mathbb{Z}$ is 
$$\Lambda=\mathbb Z[h_1,h_2,\ldots,]\,,$$
where $h_r=\sum_{1\leq i_1\leq \cdots\leq i_r} x_{i_1}\cdots x_{i_r}$.
Bases for the space $\Lambda$ are traditionally indexed by
{\it partitions} $\lambda=(\lambda_1,\ldots,\lambda_\ell)\in \mathbb Z_{>0}^\ell$ 
where $\lambda_1\geq\cdots\geq \lambda_\ell$.  
For example, 
the monomial symmetric functions $m_\lambda$ are defined for each
partition $\lambda$ as the sum over $x^\alpha$ for
each distinct rearrangement $\alpha$ of the parts of $\lambda$.

It is often convenient to view a partition as a diagram.
In particular, the {\it Ferrers diagram} with a partition shape, $\lambda$, 
is the configuration of boxes stacked in a left-justified manner
with $\lambda_i$ boxes in row $i$, for $1\leq i\leq\ell$.
We use the French notation, where row 1 is the largest and
bottommost row.
\begin{example}
The partition $\lambda=(4,4,3)$ corresponds to the diagram
\[
\footnotesize
\tableau[sbY]
{,, \cr ,,, \cr ,,,\cr}.
\qquad
\qquad
\]

\end{example}
A {\it skew-shape} is a pair of partitions $\lambda=(\lambda_1,\ldots,\lambda_\ell)$ and $\mu=(\mu_1,\ldots,\mu_r)$ where $r\leq \ell$ and $\mu_i\leq \lambda_i$, for each $i=1,\ldots, r$.  The skew-shape is denoted $\lambda/\mu$ and the diagram is the set-theoretic difference between the diagrams of $\lambda$ and $\mu$.  A {\it horizontal strip} is a skew-shape where each column contains at most one cell.

\begin{example}
The skew shape $(4,3,1)/(3,2)$ is a horizontal strip,
\[(4,3,1)/(3,2)=
\footnotesize
\tableau[sbY]
{ \cr \sk,\sk, \cr \sk,\sk,\sk, \cr}.
\qquad
\qquad
\]
\end{example}

Our focus is on the Schur function basis which may
be defined in many ways.  We consider its characterization 
as the generating functions of {\it tableaux}.  A semi-standard
tableau  of weight $\mu=(\mu_1,\ldots,\mu_r)$ is a nested 
sequence of partitions 
\begin{equation}
\label{tab}
\emptyset=
\lambda^{(0)}\subset\lambda^{(1)}\subset\cdots \subset\lambda^{(r)}
\end{equation}
such that
$\lambda^{(i)}/\lambda^{(i-1)}$ is a horizontal $\mu_i$-strip.
It is generally represented with a filling of shape $\lambda^{(r)}$
by placing $i$ in the cells of the skew $\lambda^{(i)}/\lambda^{(i-1)}$.
When the weight of a tableau is $(1,1,\ldots,1)$ it is called 
{\it standard}.  $SSYT(\lambda,\mu)$ denotes 
the set of semi-standard tableaux of shape $\lambda$ and weight $\mu$
and the union over all weights is $SSYT(\lambda)$.
For any partition $\lambda$, the {\it Schur function}, $s_\lambda$,
can be defined by
$$
s_\lambda = \sum_{T\in SSYT(\lambda)} x^{{\rm weight}(T)}\,.
$$

We shall denote the set $SYTT(\lambda,1^n)$ of standard tableaux of shape $\lambda$
by $SYT(\lambda)$.
Recall that if $T\in SYT(\lambda)$ then the row reading word, 
which we will denote $r(T)$, is an ordered list of positive integers 
obtained by listing the entries of the tableau starting from the upper 
left corner and proceeding left to right, top to bottom.  
Observe that the row reading word of any standard Young tableau 
of shape $\lambda$ is necessarily a permutation, $r(T)\in\mathfrak{S}_n$ 
where $\lambda\vdash n$. 
\begin{example}
The set of all standard tableaux on 5 letters with shape $\lambda=(3,2)$ is
\[
\footnotesize
T_1=\tableau[sbY]
{ 4,5 \cr 1,2,3 \cr},
\qquad
\qquad
T_2=\tableau[sbY]
{ 3,5 \cr 1,2,4 \cr},
\qquad
\qquad
T_3=\tableau[sbY]
{ 3,4 \cr 1,2,5 \cr},
\qquad
\qquad
\]
\[
\footnotesize
T_4=\tableau[sbY]
{ 2,5 \cr 1,3,4 \cr},
\qquad
\qquad
T_5=\tableau[sbY]
{ 2,4 \cr 1,3,5 \cr}.
\qquad
\qquad
\]
The respective reading words are $r(T_1)=45123, r(T_2)=35124, r(T_3)=34125, r(T_4)=25134,$ and $r(T_5)=24135.$

\end{example}
More details on symmetric functions and tableaux can be found
in, for example, \cite{Stanley99,Fulton97,Macdonald95}.

Another useful tool in the study of symmetric function theory is the set of
quasisymmetric functions.   Instead of partitions, these are
associated to permutations.  For any  $\sigma\in\mathfrak{S}_n$,
define the \emph{descent set}, $\Des(\sigma)$, 
to be the set $\{i\in [n-1]|\sigma(i+1)<\sigma(i)\}$.  If $\Des(\sigma)=\{i_1<i_2<\ldots<i_k\}$ 
define the composition $\Des'(\sigma)=(i_1,i_2-i_1,i_3-i_2,\ldots,n-i_k)\models n$.  We will use a lower case to be the cardinality and $\iDes$ to denote the descent set of the inverse, i.e. 
\[\iDes(\sigma)=\Des(\sigma^{-1}) \hspace{5mm} \text{ and } \hspace{5mm} \des(\sigma)=|\Des(\sigma)|.\]

For any permutation $\sigma\in\mathfrak{S}_n$,
the {\it fundamental quasisymmetric function} is
defined as
\begin{equation} 
F_{\Des(\sigma)}=
\sum_{\substack{i_1\leq\ldots\leq i_n\\i_j=i_{j+1}\Rightarrow j\notin \Des(\sigma)}}
x_{i_1}\cdots x_{i_n}.\end{equation}
Note by the correspondence outlined above, we can alternatively
index fundamental quasisymmetric functions by compositions.  

In \cite{Gessel84} Gessel proved that, for any partition $\lambda$,
the Schur function is
\begin{equation}
\label{eq:gessel}
 s_\lambda=\sum_{T\in SYT(\lambda)} F_{\iDes(r(T))}. 
\end{equation}
Although it is not immediately clear from this definition 
that a Schur function is symmetric, a combinatorial proof 
is given in \cite{Bender72}.

\section{Chromatic Symmetric Functions}
Throughout this section let $G=(V,E)$ be a graph with finite vertex set $V$ and finite edge set $E$.  We will also assume that $G$ is a {\it simple graph}---a graph that is simply laced and has no loops.

A {\it proper coloring} of a graph $G$ is a map $\kappa:V\rightarrow\P$ such that for all $\{u,v\}\in E, \kappa(u)\neq \kappa(v)$.  That is, adjacent vertices never share the same color.  Let $\mathcal{C}(G)$ be the set of proper colorings of $G$.  In \cite{Stanley95} Stanley defined the {\it chromatic symmetric function} of $G$ to be
\[ \textbf{X}_G(\textbf{x}):=\sum_{\kappa\in\mathcal{C}(G)} \textbf{x}_\kappa, \]
where $\textbf{x}:=(x_1,x_2,\ldots)$ is a sequence of commuting indeterminates and $\textbf{x}_\kappa=\prod_{v\in V}x_{\kappa(v)}$.

Observe that for any coloring, $\kappa$, and $\sigma$ any permutation of $\P$, $\sigma\circ\kappa$ is also a coloring of $G$.  Thus this shows that $\textbf{X}_G$ is indeed symmetric.  Furthermore, it is immediate that if $|V|=n$ then $\textbf{X}_G$ is homogeneous of degree $n$.

A {\it hook} is a partition of the following form:  $(k,1,1,\ldots,1)$, where $k\in\P$.  When we expand $\textbf{X}_G$ in the basis of Schur functions a {\it hook coefficient} is any coefficient of a Schur function with a hook shape.

\begin{example}
The chromatic symmetric function of the claw graph is  
\begin{equation} \textbf{X}_{K_{3,1}}=s_{(3,1)}-s_{(2,2)}+5s_{(2,1,1)}+8s_{(1,1,1,1)}.  \end{equation}
The hook coefficients are 1 for the hook $(3,1)$, 5 for the hook $(2,1,1)$, and 8 for the (trivial) hook $(1,1,1,1)$.
\end{example}
\begin{example}
The chromatic symmetric function of the complement of $K_3$ is
\begin{equation} \textbf{X}_{\overline{K_3}}=s_{(3)}+2s_{(2,1)}+s_{(1,1,1)}. \end{equation}
The hook coefficients are 1 for the hook $(3)$, 2 for the hook $(2,1)$, and 1 for the hook $(1,1,1)$.
\end{example}

There are some interesting known properties of the chromatic symmetric function.  First of all, it is a true generalization of the chromatic function of a graph.   Recall that the {\it chromatic function} of a graph, $G$, is the unique function, $\chi_G(x)$ of degree $n=|V|$ such that $\chi_G(k)$ is the number of proper colorings of a graph using no more than $k$ colors.

\begin{example}
The chromatic function for the path graph, $P_n$ is 
\begin{equation} \chi_{P_n}(x)=x(x-1)^{n-1}. \end{equation}

Specifically for $P_3$:  \begin{tikzpicture}[node distance=1.5cm]
  \node[draw, circle] (1) {};
  \node[draw, circle, right of = 1] (2) {};
  \node[draw, circle, right of = 2] (3) {};

  \draw[thick]	(2) -- (1);
  \draw[thick]	(2) -- (3);
\end{tikzpicture},
\begin{align*} \chi_{P_3}(0)&=0, \text{ true for all chromatic functions, }\\
\chi_{P_3}(1)&=0, \text{ there is no way to color $P_3$ with just one color, } \\
\chi_{P_3}(2)&=2, \text{ 1-2-1 and 2-1-2, and }\\
\chi_{P_3}(3)&=12, \text{ which includes the 2-colorings from above.}
\end{align*}
It is easy to check that $\chi_{P_3}(x)=x(x-1)^2$.
\end{example}

\begin{proposition} [Stanley, 1995]

Let $w_k\in\naturals^n$ be such that
\begin{equation} (w_k)_i=\left\{\begin{array}{ll}1 & \text{if } i\leq k \\ 0& \text{otherwise}\end{array}\right. . \end{equation}
Then $\textbf{X}_G(w_k)=\chi_G(k)$.
\end{proposition}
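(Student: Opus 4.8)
The plan is to evaluate $\textbf{X}_G(\textbf{x})$ at the specialization $\textbf{x} = w_k$ directly from the definition as a sum over proper colorings, and to recognize the result as the chromatic polynomial evaluated at $k$. First I would recall that $\textbf{X}_G(\textbf{x}) = \sum_{\kappa\in\mathcal{C}(G)} \textbf{x}_\kappa$ where $\textbf{x}_\kappa = \prod_{v\in V} x_{\kappa(v)}$. Substituting $x_i = (w_k)_i$, the monomial $\textbf{x}_\kappa$ becomes $\prod_{v\in V} (w_k)_{\kappa(v)}$, which is $1$ if $\kappa(v)\le k$ for every vertex $v$, and $0$ otherwise. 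Hence only those proper colorings $\kappa$ that use colors exclusively from $\{1,2,\ldots,k\}$ contribute, and each such coloring contributes exactly $1$.

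Next I would observe that a proper coloring $\kappa:V\to\P$ with $\kappa(v)\le k$ for all $v$ is precisely a proper coloring of $G$ using no more than $k$ colors. Therefore
\[
\textbf{X}_G(w_k) = \#\{\kappa\in\mathcal{C}(G) : \kappa(V)\subseteq\{1,\ldots,k\}\} = \chi_G(k),
\]
by the defining property of the chromatic function $\chi_G$, namely that $\chi_G(k)$ counts proper colorings using at most $k$ colors. This completes the argument.

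The only subtlety to address — and it is minor — is that $\textbf{X}_G$ is an infinite formal power series in infinitely many variables, so one should note that the specialization $\textbf{x} = w_k$ is well-defined: since $G$ has finitely many vertices, $\textbf{X}_G$ is homogeneous of degree $n = |V|$, and setting all but finitely many variables to $0$ (here all $x_i$ with $i>k$) leaves a genuine polynomial, so there is no convergence issue. I do not anticipate any real obstacle; the proposition is essentially an unwinding of the two definitions, and the main content is simply matching "coloring with colors in $\{1,\ldots,k\}$" on the symmetric-function side with "proper coloring using at most $k$ colors" on the chromatic-polynomial side.
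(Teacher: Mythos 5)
Your argument is correct and complete: specializing $x_i=1$ for $i\le k$ and $x_i=0$ for $i>k$ kills every monomial $\textbf{x}_\kappa$ of a coloring that uses a color larger than $k$ and sends every other one to $1$, so $\textbf{X}_G(w_k)$ counts proper colorings with colors in $\{1,\ldots,k\}$, which is $\chi_G(k)$ by definition. The paper itself states this proposition without proof, citing Stanley (1995); your unwinding of the two definitions is exactly the standard argument from that source, and your remark on why the specialization is well-defined (homogeneity of degree $n$, only finitely many surviving variables) is a reasonable, if minor, extra precaution.
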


An {\it orientation} of a graph is an association of an arrow to each edge.  An orientation is {\it acyclic} if there are no cycles in the resulting oriented graph.  In an oriented graph, a vertex is called a {\it sink} if there are no arrows pointing away from the vertex.  Note that an isolated vertex is considered a sink.

If one writes $\textbf{X}_G$ in terms of the elementary symmetric functions:
\[ \textbf{X}_G(\textbf{x})=\sum_{\lambda\vdash n} b_\lambda e_\lambda, \]
where the sum is over all partitions, then the coefficients have the following property.

\begin{proposition} [Stanley, 1995]

Let $b_\lambda$ be defined as above.  Let $a_k(G)$ be the number of acyclic orientations of $G$ with $k$ sinks.  Then
\[ a_k(G)=\sum_{\substack{\lambda\vdash n\\ \ell(\lambda)=k}}b_\lambda. \]
\end{proposition}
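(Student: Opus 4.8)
The plan is to recover $\sum_{\ell(\lambda)=k}b_\lambda$ all at once, by evaluating $\textbf{X}_G$ under a ring homomorphism that reads off the number of parts of a partition. Because $\Lambda$ is a polynomial algebra on $e_1,e_2,\dots$, there is a well-defined ring homomorphism $\phi\colon\Lambda\to\mathbb Z[t]$ determined by $\phi(e_r)=t$ for every $r\geq 1$; this is a legitimate specialization even though it is not evaluation at any sequence of nonnegative numbers, precisely because $\Lambda$ is freely generated by the $e_r$. Then $\phi(e_\lambda)=t^{\ell(\lambda)}$, so from $\textbf{X}_G=\sum_\lambda b_\lambda e_\lambda$ we get
\[
\phi(\textbf{X}_G)=\sum_\lambda b_\lambda\,t^{\ell(\lambda)}=\sum_{k\geq 1}\Big(\sum_{\ell(\lambda)=k}b_\lambda\Big)t^k .
\]
Hence the proposition is equivalent to the identity $\phi(\textbf{X}_G)=\sum_{k\geq 1}a_k(G)\,t^k$, and it is enough to compute the left-hand side and compare coefficients of $t^k$.

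First I would expand $\textbf{X}_G$ in power sums. Inclusion--exclusion over the edges forced to be monochromatic gives $\textbf{X}_G=\sum_{S\subseteq E}(-1)^{|S|}p_{\lambda(S)}$, where $\lambda(S)$ lists the sizes of the connected components of the spanning subgraph $(V,S)$; write $c(S)=\ell(\lambda(S))$ for their number. Since $\phi$ is multiplicative I only need its value on one power sum, and that is a short generating--function computation: writing $E(y)=\sum_{r\geq 0}e_ry^r$ one has $\phi(E(y))=\frac{1+(t-1)y}{1-y}$, and comparing this with the identity $E'(y)/E(y)=\sum_{m\geq 1}(-1)^{m-1}p_my^{m-1}$ (after applying $\phi$) yields $\phi(p_m)=(t-1)^m-(-1)^m$. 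Substituting,
\[
\phi(\textbf{X}_G)=\sum_{S\subseteq E}(-1)^{|S|}\prod_{C}\big[(t-1)^{|C|}-(-1)^{|C|}\big],
\]
the product over the components $C$ of $(V,S)$.

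Now I would reorganize this sum. Expanding each bracket, a single term of the product amounts to choosing, for each component, either its $(t-1)^{|C|}$ part or its $-(-1)^{|C|}$ part; if $A$ denotes the union of the chosen components, the datum of $S$ together with this choice is exactly the same as a vertex subset $A\subseteq V$ together with edge subsets $S_A\subseteq E(G[A])$ and $S_B\subseteq E(G[V\setminus A])$, since $S$ then has no edge crossing between $A$ and $V\setminus A$. Sorting out the signs, the $S_A$-sum becomes $\sum_{S_A}(-1)^{|S_A|}$, which vanishes unless $G[A]$ is edgeless, that is, unless $A$ is a stable set; and the $S_B$-sum becomes $\sum_{S_B}(-1)^{|S_B|}(-1)^{c(S_B)}$, which is $\chi_{G[V\setminus A]}(-1)$ by the subgraph expansion of the chromatic polynomial. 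By Stanley's reciprocity theorem, $(-1)^{|V(H)|}\chi_H(-1)$ is the total number $a(H)$ of acyclic orientations of $H$, so the sum collapses to $\phi(\textbf{X}_G)=\sum_{A\ \mathrm{stable}}(t-1)^{|A|}\,a(G[V\setminus A])$. Finally, for a stable set $A$ an acyclic orientation of $G$ in which every vertex of $A$ is a sink is the same thing as an acyclic orientation of $G[V\setminus A]$ (orient every edge meeting $A$ toward $A$; since sinks lie on no directed cycle, acyclicity is preserved), so $a(G[V\setminus A])$ counts the acyclic orientations $\mathfrak o$ whose sink set contains $A$. Interchanging the order of summation and using $\sum_{A\subseteq\mathrm{sinks}(\mathfrak o)}(t-1)^{|A|}=t^{|\mathrm{sinks}(\mathfrak o)|}$ (every subset of the sink set is automatically stable) turns the right-hand side into $\sum_{\mathfrak o}t^{|\mathrm{sinks}(\mathfrak o)|}=\sum_{k}a_k(G)t^k$, which completes the proof.

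I expect the only genuinely fiddly step to be the middle one: bookkeeping the exponents $|S|$, $c(S)$, $|A|$ and $|V\setminus A|$ so that the stable-set condition and the factor $\chi_{G[V\setminus A]}(-1)$ fall out cleanly; the two endpoints (the value $\phi(p_m)=(t-1)^m-(-1)^m$ and the orientation-counting identity) are routine. An essentially equivalent alternative is to apply $\phi\circ\omega$, where $\omega$ is the standard involution, together with the companion identity $\omega\textbf{X}_G=\sum_{\kappa}\textbf{x}_\kappa\prod_i a(G[\kappa^{-1}(i)])$, but the direct computation above seems cleanest.
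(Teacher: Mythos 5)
Your argument is correct, and it is worth noting that the paper itself offers no proof of this proposition at all---it is quoted as background and attributed to Stanley (1995), so the relevant comparison is with Stanley's original treatment rather than with anything in the text. What you have written is a complete, self-contained reproof of Stanley's ``sink theorem'': the ring homomorphism $\phi$ with $\phi(e_r)=t$ is a clean way to package the sums $\sum_{\ell(\lambda)=k}b_\lambda$ as coefficients of $t^k$, the value $\phi(p_m)=(t-1)^m-(-1)^m$ checks out from the Newton generating-function identity, and the reorganization of $\sum_{S\subseteq E}(-1)^{|S|}\phi(p_{\lambda(S)})$ over the pair (stable set $A$, subgraph of $G[V\setminus A]$) is handled with the signs in the right places: the $S_A$-sum enforces stability of $A$, the $S_B$-sum is the Whitney rank expansion of $\chi_{G[V\setminus A]}(-1)$, and the factor $(-1)^{|V\setminus A|}$ converts it to the count of acyclic orientations via Stanley's 1973 acyclic-orientation theorem (``reciprocity'' is a slight misnomer, but the statement you use is the right one). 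The final bijection between acyclic orientations of $G[V\setminus A]$ and acyclic orientations of $G$ whose sink set contains the stable set $A$, followed by the binomial resummation $\sum_{A\subseteq\mathrm{sinks}(\mathfrak o)}(t-1)^{|A|}=t^{\snk(\mathfrak o)}$, is exactly the step that makes the coefficient comparison immediate; you correctly include $A=\emptyset$ and correctly observe that subsets of the sink set are stable. In short, where the paper simply cites Stanley, you supply a full proof whose main ingredients (power-sum expansion of $\textbf{X}_G$, a specialization reading off $\ell(\lambda)$, and the $(-1)$-evaluation of the chromatic polynomial) are essentially the classical route to this theorem, assembled correctly.
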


\section{Chromatic Quasisymmetric Functions}
Let $G=(V,E)$ be a simple graph with finite vertex set.  Suppose that $[V]=\{1,2,\ldots,|V|\}$ and $\zeta$ is a labeling of the vertex set, {\it i.e.} $\zeta:[V]\rightarrow V$ is a bijection.  Let $\textbf{x}=(x_1,x_2,\ldots)$ be a list of commuting indeterminates.  In \cite{Shareshian14} Shareshian and Wachs define the chromatic quasisymmetric function of $G$ with respect to the labeling $\zeta$ to be
\[ \textbf{X}_{G,\zeta}(\textbf{x},t)=\sum_{\kappa\in\mathcal{C}(G)}t^{\asc(\kappa)}\textbf{x}_\kappa, \]
where 
\[ \asc(\kappa):=|\{\{u,v\}\in E | \zeta(u)<\zeta(v) \text{ and } \kappa(u)<\kappa(v)\}|. \] 
Note that the definition of $\textbf{X}_{G,\zeta}(\textbf{x},t)$ relies on both the isomorphism class of the graph and the choice of labeling, unlike  $\textbf{X}_G(\textbf{x})$, which only relies on the isomorphism class of $G$.  However, it is immediate that  $\textbf{X}_{G,\zeta}(\textbf{x},1)= \textbf{X}_G(\textbf{x})$ for any $\zeta$.

Let $O(G)$ be the set of acyclic orientations of the graph $G$.  For each acyclic orientation $\mathfrak{o}$ we define two statistics, the descents and the sinks.  Let $\des(\mo)$ be the the number of edges that agree with the vertex labeling, {\it i.e.} $(u,v)$ is a directed edge of $\mo$ and $\zeta(u)>\zeta(v)$.  Let $\snk(\mo)$ be the number of sinks---vertices in which there are no arrows based at that vertex.  Note that an isolated vertex is a sink.

Recall that when we expanded the chromatic symmetric function in the fundamental quasisymmetric basis, we labeled the coefficients $\textbf{X}_G(\textbf{x},t)=\sum_\alpha d_\alpha(t)F_\alpha.$  This brings us to:

\begin{theorem} \label{bigtheorem}When the chromatic quasisymmetric function associated to $G$ and $\zeta$ is expanded in the basis of (Gessel) fundamental quasisymmetric functions the hook coefficients are
\[ d_{\lambda^{(k)}}(t)=\sum_{\mo\in O(G)} {\snk(\mo)-1 \choose k-1}\cdot  t^{\des(\mo)}. \]
\end{theorem}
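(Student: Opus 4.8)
The plan is to read off the coefficient of $F_{(k,1^{n-k})}$ directly from the definition $\textbf{X}_{G,\zeta}(\textbf{x},t)=\sum_{\kappa}t^{\asc(\kappa)}\textbf{x}_\kappa$. Let $D=\{k,k+1,\ldots,n-1\}$ be the descent set of the composition $\lambda^{(k)}=(k,1^{n-k})$. From $F_{D'}=\sum_{E\supseteq D'}M_{\alpha(E)}$ (where $\alpha(E)$ is the composition of $n$ whose partial sums are the elements of $E$, and $M_{\alpha(E)}$ the corresponding monomial quasisymmetric function), Möbius inversion over the Boolean lattice gives
\[
d_{\lambda^{(k)}}(t)=\sum_{E\subseteq D}(-1)^{|D|-|E|}\,[M_{\alpha(E)}]\,\textbf{X}_{G,\zeta}(\textbf{x},t).
\]
Here $[M_\alpha]\textbf{X}_{G,\zeta}(\textbf{x},t)=\sum t^{\asc(\kappa)}$ runs over proper colorings using color $i$ exactly $\alpha_i$ times, i.e.\ over ordered set partitions $(C_1,\ldots,C_m)$ of $V$ into stable sets with $|C_i|=\alpha_i$; and the subsets $E\subseteq D$ are exactly the descent sets of the compositions of $n$ with first part $\ge k$. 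Combining these turns the formula into an alternating sum
\[
d_{\lambda^{(k)}}(t)=\sum_{\substack{(C_1,\ldots,C_m)\text{ stable}\\ |C_1|\ge k}}(-1)^{\,n-k-m+1}\,t^{\asc(C_1,\ldots,C_m)}
\]
over ordered stable set partitions of $V$ whose first block has at least $k$ vertices, $\asc$ being the ascent statistic of the associated coloring.

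The next step is to organize this alternating sum by acyclic orientations. Reading $(C_1,\ldots,C_m)$ as a proper coloring and orienting each edge from its larger-colored to its smaller-colored endpoint produces an acyclic orientation $\mo$, and one checks that the ascent count of the coloring equals $\des(\mo)$, so $t$ occurs to a single power on each fiber of the map $(C_1,\ldots,C_m)\mapsto\mo$. Hence
\[
d_{\lambda^{(k)}}(t)=\sum_{\mo\in O(G)}t^{\des(\mo)}\!\!\sum_{\substack{(C_1,\ldots,C_m)\,\mapsto\,\mo\\ |C_1|\ge k}}\!\!(-1)^{\,n-k-m+1},
\]
and the theorem reduces to showing the inner alternating sum equals $\binom{\snk(\mo)-1}{k-1}$.

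Fixing $\mo$, the ordered stable partitions that map to it are precisely the colorings that strictly decrease along every arrow of $\mo$; the first (minimally colored) block $C_1$ is then forced to consist of sinks of $\mo$, and removing it leaves an arbitrary such coloring for the acyclic orientation induced on $V\setminus C_1$. Peeling $C_1$ off and invoking the identity that, for any acyclic orientation $\mo'$, the signed sum $\sum(-1)^{(\text{number of blocks})}$ over all colorings strictly decreasing along $\mo'$ equals $(-1)^{|V(\mo')|}$ — which I would prove by induction on $|V(\mo')|$ using the same peeling together with $\sum_{\emptyset\ne S\subseteq\mathrm{Sinks}(\mo')}(-1)^{|S|}=-1$ — collapses the inner sum to $\sum_{i\ge k}(-1)^{i-k}\binom{s}{i}$ with $s=\snk(\mo)$; the elementary identity $\sum_{i\ge k}(-1)^{i-k}\binom{s}{i}=\binom{s-1}{k-1}$ then completes the proof. (At $t=1$: since $\textbf{X}_{G,\zeta}(\textbf{x},1)=\textbf{X}_G(\textbf{x})$ and, by Gessel's formula, exactly one standard Young tableau $T$ on $n$ cells has $\iDes(r(T))=\{k,\ldots,n-1\}$ — namely the one of hook shape $\lambda^{(k)}$ — one gets $d_{\lambda^{(k)}}(1)=c_{\lambda^{(k)}}$, recovering Theorem~\ref{introthm}.)

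The main obstacle is the combinatorial dictionary underlying the middle two paragraphs rather than any single computation: one must verify that $(C_1,\ldots,C_m)\mapsto\mo$ is well defined and surjective onto $O(G)$, that its fiber over $\mo$ is exactly the set of colorings strictly decreasing along $\mo$, that $\asc$ of the coloring equals $\des(\mo)$ under this correspondence, and that deleting a set of sinks of $\mo$ leaves precisely the strictly-decreasing colorings of the restricted orientation and nothing more. Once these points are pinned down, the alternating-sum lemma and the closing binomial identity are routine.
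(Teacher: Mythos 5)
Your argument is correct, but it follows a genuinely different route from the paper. The paper does not extract coefficients by inclusion--exclusion at all: it starts from Shareshian and Wachs' expansion (their equations (3.3)--(3.4)), $\textbf{X}_{G,\zeta}(\textbf{x},t)=\sum_{\mo\in O(G)}t^{\des(\mo)}\sum_{\sigma\in\mathcal{L}'(\mo,\omega_\mo)}F_{n-\Des(\sigma)}$, and then, for a cleverly chosen increasing labeling $\omega_\mo$ that assigns the smallest labels to the sinks, directly counts the linear extensions $\sigma$ with descent set $\{1,\ldots,n-k\}$: the entry $1$ is forced into position $n+1-k$, everything after it must be a sink, and the permutation is determined by choosing $k-1$ of the remaining $\snk(\mo)-1$ sinks, giving $\binom{\snk(\mo)-1}{k-1}$ as a manifestly positive count. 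You instead work from the definition of $\textbf{X}_{G,\zeta}$: monomial expansion, M\"obius inversion over the Boolean lattice to isolate $d_{\lambda^{(k)}}(t)$ as an alternating sum over ordered stable partitions with $|C_1|\ge k$, grouping by the induced acyclic orientation (your checks are right: orienting edges toward smaller colors makes $\asc(\kappa)=\des(\mo)$ constant on fibers, the color-$1$ block must consist of sinks, and peeling it off restricts to an arbitrary strictly decreasing surjective coloring of the restricted orientation), then the signed-sum lemma $\sum(-1)^{\#\mathrm{blocks}}=(-1)^{|V|}$ proved by the same peeling, and finally $\sum_{i\ge k}(-1)^{i-k}\binom{s}{i}=\binom{s-1}{k-1}$. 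Both are sound; what your route buys is self-containedness (no reliance on the Shareshian--Wachs $F$-expansion over acyclic orientations and linear extensions), at the cost of a sign-cancelling argument, whereas the paper's route leans on that machinery but produces the binomial coefficient by a direct positive enumeration of linear extensions, which also explains combinatorially why the answer is a count of subsets of sinks. Your closing remark that $d_{\lambda^{(k)}}(1)=c_{\lambda^{(k)}}$ via Gessel's formula likewise parallels the paper's citation of Egge--Loehr--Warrington for Theorem \ref{maintheorem}.
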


\begin{proof}
We will be following the dual of the notation from \cite{Shareshian14}, that is we are inverting all of the arrows so that we get results in terms a sinks rather than sources.  For each $\mo\in O(G)$ select $\omega_\mo$ to be an {\it increasing} labeling such that the sinks are minimal.  That is, for all vertices $u,v$, if there is a directed path from $u$ to $v$ then $\omega_\mo(u)>\omega_\mo(v)$, if $\snk(\mo)=s$ and if $v$ is a sink, then $\omega_\mo(v)\leq s$.  

Set $\mathcal{L}'(\mo,\omega_\mo)$ to be the set of all linear extensions dual to the labeled poset $(\mo,\omega_\mo)$.  One can express $\mathcal{L}'(\mo,\omega_\mo)$ as a set of permutations $[\omega_\mo(v_1),\ldots,\omega_\mo(v_n)]$ (written in one line notation) where $v_1,\ldots,v_{|V|}$ is a sequencing of the vertices such that if there is a directed path in $\mo$ from vertex $v_i$ to vertex $v_j$ then $i<j$.

For example, if we consider the path graph on four vertices, with orientation and increasing labeling, $\omega$, as:

\begin{center}\begin{tikzpicture}[node distance=2.5cm]
  \node[draw, circle] (1) {1};
  \node[draw, circle, right of = 1] (2) {4};
  \node[draw, circle, right of = 2] (3) {3};
  \node[draw, circle, right of = 3] (4) {2};

  \draw[thick,->]	(2) -- (1);
  \draw[thick,->]	(2) -- (3);
  \draw[thick,->]	(3) -- (4);
\end{tikzpicture},\end{center}

then the sequencings $v_1,\ldots,v_4$ would be

\begin{center}\begin{tikzpicture}[node distance=2.5cm]
  \node[draw, circle] (1) {$v_4$};
  \node[draw, circle, right of = 1] (2) {$v_1$};
  \node[draw, circle, right of = 2] (3) {$v_2$};
  \node[draw, circle, right of = 3] (4) {$v_3$};

  \draw[thick,->]	(2) -- (1);
  \draw[thick,->]	(2) -- (3);
  \draw[thick,->]	(3) -- (4);
\end{tikzpicture},

\begin{tikzpicture}[node distance=2.5cm]
  \node[draw, circle] (1) {$v_3$};
  \node[draw, circle, right of = 1] (2) {$v_1$};
  \node[draw, circle, right of = 2] (3) {$v_2$};
  \node[draw, circle, right of = 3] (4) {$v_4$};

  \draw[thick,->]	(2) -- (1);
  \draw[thick,->]	(2) -- (3);
  \draw[thick,->]	(3) -- (4);
\end{tikzpicture},

\begin{tikzpicture}[node distance=2.5cm]
  \node[draw, circle] (1) {$v_2$};
  \node[draw, circle, right of = 1] (2) {$v_1$};
  \node[draw, circle, right of = 2] (3) {$v_3$};
  \node[draw, circle, right of = 3] (4) {$v_4$};

  \draw[thick,->]	(2) -- (1);
  \draw[thick,->]	(2) -- (3);
  \draw[thick,->]	(3) -- (4);
\end{tikzpicture}.

This means that the respective elements of $\mathcal{L}'(\mo,\omega_\mo)$ would be:
\[ 4321, \hspace{5mm} 4312, \hspace{5mm} \text{and} \hspace{5mm} 4132.\]
\end{center}

From equations (3.3) and (3.4) of \cite{Shareshian14} we have for {\it any} simple graph, $G$:
\begin{equation} \textbf{X}_{G,\zeta}(\textbf{x},t)=\sum_{\mo\in O(G)} t^{\des(\mo)} \sum_{\sigma\in\mathcal{L}'(\mo,\omega_\mo)} F_{n-\Des(\sigma)}, \end{equation}
where $n-\Des(\sigma)=\{i|n-i\in \Des(\sigma)\}.$
If we expand $\textbf{X}_{G,\zeta}(\textbf{x},t)$ in the basis of fundamental quasisymmetric functions this gives us:
\begin{equation} \sum_{\alpha}d_\alpha(t)\cdot F_\alpha = \sum_{\mo\in O(G)} t^{\des(\mo)} \sum_{\sigma\in\mathcal{L}'(\mo,\omega_\mo)} F_{n-\Des(\sigma)}. \end{equation}
Since the fundamental quasisymmetric functions form a basis and the partition (composition) $\lambda^{(k)}$ corresponds to the descent set $\{k,k+1,\ldots, n-1\}$ we get
\begin{equation} d_{\lambda^{(k)}}(t) = \sum_{\mo\in O(G)} t^{\des(\mo)} \cdot\sum_{\substack{\sigma\in\mathcal{L}'(\mo,\omega_\mo)\\ \Des(\sigma)=\{1,2,\ldots,n-k\}}} 1. \end{equation}
So all that is left to show is that the number of permutations $\sigma\in\mathcal{L}'(\mo,\omega_\mo)$ that have descent set $\{1,2,\ldots,n-k\}$ is ${\snk(\mo)-1 \choose k-1}$.

First we will address the structure of the permutations.  In order to have descent set $\{1,2,\ldots,n-k\}$ the permutation $\sigma$ must be decreasing in its first $n-k$ entries and then increasing thereafter.  In order for this to be true there {\it must} be a 1 in the $(n+1-k)^{th}$ location and all entries following the 1 must correspond to sinks.  If we suppose otherwise, that $u$ is not a sink and $\omega_\mo(u)$ follows the 1 entry, then note that there is a directed path from $u$ to a sink, $v$.  However, due to the minimality of labeling of sinks, we know that $\omega_\mo(v)<\omega_\mo(u)$.  But in the definition of $\sigma,$ recall that $\omega_\mo(u)$ must precede $\omega_\mo(v)$ which creates a descent after position $n-k$, which is a contradiction.

Thus by fixing the position of 1 within $\sigma$ and choosing $k-1$ sinks whose labels follow the 1 entry we have completely determined the permutation $\sigma$.  Note that any choice of $k-1$ sinks works because there are no directed paths between the sinks.  Furthermore, the labeling completely determines the remainder of $\sigma$ after the choice of the `following' sinks.  Thus there are ${\snk(\mo)-1 \choose k-1}$ such permutations.  This gives us that
\begin{equation} \sum_{\substack{\sigma\in\mathcal{L}'(\mo,\omega_\mo)\\ \Des(\sigma)=\{1,2,\ldots,n-k\}}} 1 = {\snk(\mo)-1 \choose k-1}, \end{equation}
which completes the proof.
\end{proof}

When we consider the symmetric case, we substitute $t=1$ into theorem \ref{bigtheorem}.  We can then group acyclic orientations together by their numbers of sinks.  This gives us the following corollary:

\begin{theorem}[Proof of Theorem \ref{introthm}] \label{maintheorem}
The hook coefficients of the chromatic symmetric function are given by
\begin{equation} c_{\lambda^{(k)}}=\sum_{j=1}^{|V|} {j-1 \choose k-1} \cdot a_j \end{equation}
where $a_j$ is the number of acyclic orientations of $G$ with $j$ sinks.
\end{theorem}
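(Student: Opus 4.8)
The plan is to obtain Theorem \ref{maintheorem} as a direct specialization of Theorem \ref{bigtheorem}, exploiting the already-observed identity $\textbf{X}_{G,\zeta}(\textbf{x},1)=\textbf{X}_G(\textbf{x})$. First I would fix an arbitrary labeling $\zeta$ of $G$ and expand both sides of this identity in the fundamental quasisymmetric basis: on the left, $\textbf{X}_{G,\zeta}(\textbf{x},1)=\sum_\alpha d_\alpha(1)F_\alpha$; on the right, $\textbf{X}_G(\textbf{x})=\sum_\alpha \big(\sum_{\lambda}c_\lambda\,[\text{coeff of }F_\alpha\text{ in }s_\lambda]\big)F_\alpha$ via Gessel's formula \eqref{eq:gessel}. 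Comparing the coefficient of the single fundamental quasisymmetric function $F_{\lambda^{(k)}}$ (equivalently $F_\alpha$ for $\alpha$ the composition corresponding to the descent set $\{k,k+1,\ldots,n-1\}$), one reads off that the Schur-hook coefficient $c_{\lambda^{(k)}}$ equals $d_{\lambda^{(k)}}(1)$; the point is that among standard tableaux, only the single-row and single-column hooks contribute an $F$ supported on a hook descent composition, so the Schur-to-fundamental change of basis is triangular in the right way on hooks. I would state this comparison as the key step.

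Having reduced to $c_{\lambda^{(k)}}=d_{\lambda^{(k)}}(1)$, I would then substitute $t=1$ into the formula of Theorem \ref{bigtheorem}:
\[
d_{\lambda^{(k)}}(1)=\sum_{\mo\in O(G)}\binom{\snk(\mo)-1}{k-1}\cdot 1^{\des(\mo)}=\sum_{\mo\in O(G)}\binom{\snk(\mo)-1}{k-1}.
\]
Finally I would partition the sum over all acyclic orientations $O(G)$ according to the number of sinks: grouping the terms with $\snk(\mo)=j$ contributes $\binom{j-1}{k-1}$ times the number of such orientations, which is $a_j$ by definition. Summing over $j$ from $1$ to $|V|$ (orientations with zero sinks do not exist among acyclic orientations, and at most $|V|$ sinks are possible) yields exactly $\sum_{j=1}^{|V|}\binom{j-1}{k-1}a_j$, completing the proof.

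The only genuinely delicate point is the first step: one must be careful that the Schur-to-fundamental expansion really does isolate $c_{\lambda^{(k)}}$ when one extracts the coefficient of the hook-shaped descent composition, since a priori many Schur functions $s_\mu$ could contribute an $F_{\lambda^{(k)}}$ term. I would handle this by noting that the descent composition $\Des'$ attached to a standard Young tableau of shape $\mu$ equals the ``staircase'' composition $(1,1,\ldots,1,k)$ (or its relevant reversal under the $n-\Des$ convention used in the proof of Theorem \ref{bigtheorem}) only when $\mu$ itself is the hook $\lambda^{(k)}$ — indeed a tableau whose inverse descent set is an interval forces the shape to be a hook — so the change of basis is unitriangular on hooks and the extraction is valid. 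This is the main obstacle; everything after it is the routine substitution $t=1$ and regrouping described above, so I would keep those steps terse.
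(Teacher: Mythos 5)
Your overall route coincides with the paper's for the second half of the argument: substitute $t=1$ into Theorem \ref{bigtheorem}, use the observation $\textbf{X}_{G,\zeta}(\textbf{x},1)=\textbf{X}_G(\textbf{x})$, and regroup the resulting sum over acyclic orientations according to the number of sinks, which contributes $\binom{j-1}{k-1}a_j$ for each $j$; that is exactly the published computation. Where you genuinely differ is in the key identification $c_{\lambda^{(k)}}=d_{\lambda^{(k)}}(1)$: the paper disposes of this step by citing Theorem 15 of \cite{Egge10}, whereas you derive it directly from Gessel's expansion \eqref{eq:gessel}, arguing that the only standard Young tableau whose descent set is $\{k,k+1,\ldots,n-1\}$ (the set corresponding to the composition $\lambda^{(k)}$) has shape $\lambda^{(k)}$ itself and is unique, so extracting the coefficient of $F_{\lambda^{(k)}}$ from $\sum_\mu c_\mu s_\mu$ isolates exactly $c_{\lambda^{(k)}}$. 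That self-contained argument is correct and buys you independence from the citation: entries $1,\ldots,k$ are forced into the first row (no descents among them), and each of $k+1,\ldots,n$ must lie in a strictly higher row than its predecessor, which forces the hook shape and a unique filling, so the Schur-to-fundamental matrix is the identity on this particular coefficient. One caution: your blanket justification, that ``a tableau whose inverse descent set is an interval forces the shape to be a hook,'' is false as stated---the shape $(2,2)$ has a standard tableau with descent set $\{2\}$, an interval---and the garbled phrase ``single-row and single-column hooks'' should simply read ``the hook $\lambda^{(k)}$.'' What you need, and what is true, is the special case of the terminal interval $\{k,\ldots,n-1\}$ (equivalently an initial interval after the $n-\Des$ reversal used in the proof of Theorem \ref{bigtheorem}). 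With that tightening your proof is complete and in effect reproves the hook case of the Egge--Loehr--Warrington result that the paper invokes.
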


\begin{proof}
Theorem 15 of \cite{Egge10} shows that $c_{\lambda^{(k)}}=d_{\lambda^{(k)}}=d_{\lambda^{(k)}}(1)$.  From theorem \ref{bigtheorem} we have:
\begin{equation} d_{\lambda^{(k)}}(1)=\sum_{\mo\in O(G)} {\snk(\mo)-1 \choose k-1}=\sum_{j=1}^{|V|} {j-1 \choose k-1} \cdot a_j, \end{equation}
where $a_j$ is the number of acyclic orientations with $j$ sinks.  This completes the proof.
\end{proof}

Consider the set of $P$-tableaux that correspond to an acyclic orientation, $\mathfrak{o}$ in \cite{Chow97}.  From Lemma 1 in this paper we know that every $P$-tableaux corresponds to some acyclic orientation and acyclic orientations correspond to ${j-1 \choose k-1}$ $P$-tableaux (the construction can be followed even if the graph is not claw-free).  By Chow's result there are 
\begin{equation} \sum_{j=1}^{|V|} {j-1 \choose k-1} \cdot a_j \end{equation}
$P$-tableaux of hook-shape, $\lambda^{(k)}$, which is equal to the coefficient in Theorem \ref{maintheorem}.  This leads us to:

\begin{proposition}
Let $G$ be the incomparability graph of a poset $P$.  The hook coefficient $c_{\lambda^{(k)}}$ is equal to the number of $P$-tableaux of shape $\lambda^{(k)}$, \emph{regardless of whether $G$ is claw-free or not}.
\end{proposition}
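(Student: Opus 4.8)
The plan is to derive the proposition by combining Theorem~\ref{maintheorem} with the combinatorial correspondence of \cite{Chow97}; the only new ingredient is the observation that that correspondence never actually uses claw-freeness. Concretely, Theorem~\ref{maintheorem} already evaluates $c_{\lambda^{(k)}}$ as $\sum_{j=1}^{|V|}\binom{j-1}{k-1}a_j$, so it suffices to show that this same number counts the $P$-tableaux of shape $\lambda^{(k)}$ for an arbitrary poset $P$.

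First I would unpack the construction behind Lemma~1 of \cite{Chow97}. To an acyclic orientation $\mo$ of the incomparability graph $G$ of $P$, together with a choice of $k-1$ of the sinks other than one distinguished sink, Chow assigns a $P$-tableau of hook shape $\lambda^{(k)}$ (the distinguished sink occupies the corner, the chosen sinks sit along the arm, and a chain of $P$ runs up the leg), and conversely every $P$-tableau of shape $\lambda^{(k)}$ arises in this way from a unique such pair. The recipe and its inverse are purely order-theoretic, so I would reread them and check Gasharov's row and column conditions directly against the definition of an acyclic orientation of $G$, verifying that the output is always a genuine $P$-tableau and that the fiber over a given orientation is exactly as described --- all without ever appealing to a claw or any other forbidden induced subgraph. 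This is the step where the real work sits: one must be sure that claw-freeness is genuinely dispensable throughout Chow's argument, in particular that the assigned orientation is always acyclic and that the fiber over an orientation with $j$ sinks has size precisely $\binom{j-1}{k-1}$ for every $P$.

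Granting that, the bookkeeping is immediate. An acyclic orientation $\mo$ with $\snk(\mo)=j$ admits exactly $\binom{j-1}{k-1}$ choices of the $k-1$ extra sinks, so the number of $P$-tableaux of shape $\lambda^{(k)}$ is
\[
\sum_{\mo\in O(G)}\binom{\snk(\mo)-1}{k-1}=\sum_{j=1}^{|V|}\binom{j-1}{k-1}\,a_j ,
\]
after grouping the acyclic orientations by their number of sinks $j$, where $a_j$ is the number of acyclic orientations of $G$ with $j$ sinks. By Theorem~\ref{maintheorem} the right-hand side equals $c_{\lambda^{(k)}}$, which is exactly the claim.
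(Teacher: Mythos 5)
Your proposal is correct and follows essentially the same route as the paper: invoke Chow's Lemma~1 correspondence between acyclic orientations (with a choice of $k-1$ extra sinks) and hook-shaped $P$-tableaux, note that claw-freeness is never used in that construction, count to get $\sum_{j}\binom{j-1}{k-1}a_j$, and conclude via Theorem~\ref{maintheorem}. The only difference is that you spell out the verification of Chow's construction in more detail, which the paper leaves as a parenthetical remark.
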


\bibliographystyle{alpha}
\bibliography{MyBibliography}

\begin{thebibliography}{{Cho}97}

\bibitem[BK72]{Bender72}
Edward~A. Bender and Donald~E. Knuth.
\newblock Enumeration of plane partitions.
\newblock {\em J. Combinatorial Theory Ser. A}, 13:40--54, 1972.

\bibitem[{Cho}97]{Chow97}
T.~Y. {Chow}.
\newblock {A note on a combinatorial interpretation of the e-coefficients of
  the chromatic symmetric function}.
\newblock {\em ArXiv Mathematics e-prints}, December 1997.

\bibitem[ELW10]{Egge10}
Eric Egge, Nicholas~A. Loehr, and Gregory~S. Warrington.
\newblock From quasisymmetric expansions to {S}chur expansions via a modified
  inverse {K}ostka matrix.
\newblock {\em European J. Combin.}, 31(8):2014--2027, 2010.

\bibitem[Ful97]{Fulton97}
William Fulton.
\newblock {\em Young tableaux}, volume~35 of {\em London Mathematical Society
  Student Texts}.
\newblock Cambridge University Press, Cambridge, 1997.
\newblock With applications to representation theory and geometry.

\bibitem[Gas96]{Gasharov96}
Vesselin Gasharov.
\newblock Incomparability graphs of {$(3+1)$}-free posets are {$s$}-positive.
\newblock In {\em Proceedings of the 6th {C}onference on {F}ormal {P}ower
  {S}eries and {A}lgebraic {C}ombinatorics ({N}ew {B}runswick, {NJ}, 1994)},
  volume 157, pages 193--197, 1996.

\bibitem[Ges84]{Gessel84}
Ira~M. Gessel.
\newblock Multipartite {$P$}-partitions and inner products of skew {S}chur
  functions.
\newblock In {\em Combinatorics and algebra ({B}oulder, {C}olo., 1983)},
  volume~34 of {\em Contemp. Math.}, pages 289--317. Amer. Math. Soc.,
  Providence, RI, 1984.

\bibitem[GZ83]{Greene83}
Curtis Greene and Thomas Zaslavsky.
\newblock On the interpretation of {W}hitney numbers through arrangements of
  hyperplanes, zonotopes, non-{R}adon partitions, and orientations of graphs.
\newblock {\em Trans. Amer. Math. Soc.}, 280(1):97--126, 1983.

\bibitem[Hai93]{Haiman93}
Mark Haiman.
\newblock Hecke algebra characters and immanant conjectures.
\newblock {\em J. Amer. Math. Soc.}, 6(3):569--595, 1993.

\bibitem[Mac95]{Macdonald95}
I.~G. Macdonald.
\newblock {\em Symmetric functions and {H}all polynomials}.
\newblock Oxford Mathematical Monographs. The Clarendon Press, Oxford
  University Press, New York, second edition, 1995.
\newblock With contributions by A. Zelevinsky, Oxford Science Publications.

\bibitem[Sta95]{Stanley95}
Richard~P. Stanley.
\newblock A symmetric function generalization of the chromatic polynomial of a
  graph.
\newblock {\em Adv. Math.}, 111(1):166--194, 1995.

\bibitem[Sta99]{Stanley99}
Richard~P. Stanley.
\newblock {\em Enumerative combinatorics. {V}ol. 2}, volume~62 of {\em
  Cambridge Studies in Advanced Mathematics}.
\newblock Cambridge University Press, Cambridge, 1999.
\newblock With a foreword by Gian-Carlo Rota and appendix 1 by Sergey Fomin.

\bibitem[SW14]{Shareshian14}
J.~{Shareshian} and M.~L. {Wachs}.
\newblock {Chromatic quasisymmetric functions}.
\newblock {\em ArXiv Mathematics e-prints}, May 2014.

\bibitem[Whi32]{Whitney32}
Hassler Whitney.
\newblock A logical expansion in mathematics.
\newblock {\em Bull. Amer. Math. Soc.}, 38(8):572--579, 1932.

\end{thebibliography}

\end{document}